\newtheorem{thm}{Theorem}[section]
\newtheorem{lem}[thm]{Lemma}
\newtheorem{rem}[thm]{Remark}
\theoremstyle{definition}
\newtheorem{thml}{Theorem}
\titleformat*{\section}{\normalsize \bfseries \filcenter}
\newcommand{\eps}{\varepsilon}
\newtheorem*{mainthm}{Main Theorem}
\newtheorem*{thm*}{Theorem}
\newcommand{\Addresses}{{
  \bigskip
  \footnotesize

  \textsc{Department of Mathematics, The Pennsylvania State University,
    University Park, PA 16802}\par\nopagebreak
  \textit{E-mail address}: \texttt{axe930@psu.edu}
}}
\begin{document}

\title{\normalsize \textbf{Flexibility of exponents for expanding maps on a circle}}
\author{\normalsize Alena Erchenko \thanks{This work was partially supported by NSF grant DMS 16-02409.}}
\date{}
\maketitle
\thispagestyle{empty}

\begin{abstract}
We consider a smooth expanding map $g$ on the circle of degree $2$. It is known that the Lyapunov exponent of $g$ with respect to the unique invariant measure that is absolutely continuous with respect to the Lebesgue measure is positive and less than or equal to $\log 2$ which, in addition, is less than or equal to the Lyapunov exponent of $g$ with respect to the measure of maximal entropy. Moreover, the equalities only occur simultaneously. We show that these are the only restrictions on the Lyapunov exponents considered above for smooth expanding maps of degree $2$. 
\end{abstract}

\section{Introduction}

Let $g: S^1\rightarrow S^1$ be a smooth expanding map of the circle $S^1 = \mathbb R / \mathbb Z$. Let $\mathcal M(g)$ denote the set of $g$-invariant Borel probability measures. For any $\mu\in\mathcal M(g)$ that is ergodic, its Lyapunov exponent is defined by $\lambda_{\mu}(g) = \int_{S_1}\log|g'|d\mu$. We concentrate our attention on the Lyapunov exponent $\lambda_{abs}(g)$ with respect to the unique measure in $\mathcal M (g)$ that is absolutely continuous with respect to the Lebesgue measure and the Lyapunov exponent $\lambda_{max}(g)$ with respect to the measure of maximal entropy.  Also, let $h_\mu(g)$ be the metric entropy of $g$ with respect to $\mu$ for any $\mu \in \mathcal{M}(g)$. 

The Ruelle inequality \cite{R78} shows that for any $\mu\in\mathcal M(g)$, we have $h_{\mu}(g)\leqslant \lambda_{\mu}(g)$. If $m\in\mathcal M(g)$ is the measure absolutely continuous with respect to the Lebesgue measure, then we have that $h_{m}(g)=\lambda_m(g)$ \cite{BP07}. The topological entropy of $g$ is equal to $h_{top}(g)=\sup\limits_{\mu\in\mathcal M(g)}h_{\mu}(g)$. In our setting, the supremum is achieved by a unique measure called the measure of maximal entropy. For the $\times 2$-map on $S^1$ given by $x\mapsto 2x \pmod 1$, we have that $\lambda_{abs}(\times 2)=h_{top}(\times 2)=\lambda_{max}(\times 2)=\log 2$ and the measure of maximal entropy is the Lebesgue measure. 

Recall that two continuous maps $f: S^1\rightarrow S^1$ and $g: S^1\rightarrow S^1$ are said to be topologically conjugate if there exists a homeomorphism $h: S^1\rightarrow S^1$ such that $f=h^{-1}\circ g\circ h$. A continuous expanding map on $S^1$ has degree $2$ if and only if it is topologically conjugate to $\times 2$ \cite{HK95}. Also, topological entropy is invariant under topological conjugacy. Therefore, it follows from all of the above that for any smooth expanding map $g:S^1 \to S^1$ of degree $2$, we have 
\[ \lambda_{abs}(g)\leqslant \log 2\leqslant \lambda_{max}(g). \]
If we also have that $\lambda_{abs}(g)=\log 2$, then we obtain that $h_{\mu}(g)=h_{top}(g)$, where $\mu$ is the measure which is absolutely continuous with respect to Lebesgue measure. Therefore, $\mu$ is the measure of maximal entropy for $g$ and $\lambda_{max}(g)=\lambda_{abs}(g)=\log 2$. On the other hand, assume $g$ is a smooth expanding map of degree $2$ and $\lambda_{max}(g)=\log 2$, i.e., $h_{top}(g)=\int_{S_1}\log|g'|d\mu$, where $\mu$ is the measure of maximal entropy. Then, by Theorem 3 in \cite{L81} we have that $\mu$ is the measure which is absolutely continuous with respect to Lebesgue measure implying that $\lambda_{abs}(g)=\lambda_{max}(g)=\log 2$. Thus, we have seen that the equalities in $\lambda_{abs}(g)\leqslant \log 2\leqslant \lambda_{max}(g)$ only hold simultaneously. It is a natural question if these inequalities are the only restrictions on the pair of values of the considered Lyapunov exponents. In the following theorem, we answer this question affirmatively.   

\begin{mainthm}\label{main_thm}
For any positive numbers $a, b$ such that $a < \log 2 < b$, there exists a smooth expanding map $g$ of degree $2$ such that $\lambda_{abs}(g)=a$ and $\lambda_{max}(g)=b$. 
\end{mainthm}

\begin{rem}
The proof of this theorem given below can be easily adapted to show that any pair of positive numbers separated by $\log n$ can be realized as Lyapunov exponents with respect to the measure absolutely continuous with respect to the Lebesgue measure and the measure of maximal entropy of a smooth expanding map of degree $n$ for any natural number $n>1$.
\end{rem}

The main theorem is another example in the flexibility program proposed by A. Katok. The program studies the expectation that classical smooth systems (diffeomorphisms and flows) are quite flexible in comparison to actions of higher rank abelian groups. The first example in this direction was obtained in the work by the author and A. Katok \cite{EK} which shows the flexibility for the values of the pair of metric entropy with respect to Liouville measure and topological entropy for geodesic flow on surfaces of negative curvature with fixed genus greater than or equal to 2 and fixed total area. Another result in the flexibility program is due to J.Bochi, F. Rodriguez Hertz and A.Katok \cite{BKRH} who show how to vary the Lyapunov exponents with respect to the Lebesgue measure for volume-preserving Anosov diffeomorphisms with dominated splittings into one-dimensional bundles.

There are still many open questions related to the flexibility program and many properties of smooth dynamical systems whose flexibility is unknown. In particular, a natural extension of the main theorem would be to consider a similar problem on the torus $\mathbb T^2 = \mathbb R^2 / \mathbb Z^2$. Let $L_A$ be an Anosov linear area-preserving automorphism of $\mathbb T^2$. For $L_A$, we have that the Lyapunov exponent $\lambda_{Leb}(L_A)$ with respect to the Lebesgue measure, topological entropy $h_{top}(L_A)$, and the Lyapunov exponent with respect to the measure of maximal entropy $\lambda_{max}(L_A)$ coincide. Let us consider a smooth Anosov area-preserving diffeomorphism $g$ on $\mathbb T^2$ homotopic to $L_A$. Then, we have that 
\[ \lambda_{Leb}(g)\leqslant h_{top}(g)=h_{top}(L_A)\leqslant \lambda_{max}(g). \]
The question is if these inequalities are the only restriction on the considered Lyapunov exponents for smooth Anosov area-preserving diffeomorphisms on $\mathbb T^2$ homotopic to $L_A$. We hope to answer this question in future work. 

The rest of the paper consists of the proof of the main theorem. First, we construct continuous piecewise linear maps of degree $2$ which realize all possible values for the pairs of the considered Lyapunov exponents in Theorem~\ref{thm_piecewise_linear}. We then give a procedure for smoothing these maps to complete the proof.

\vspace{.5cm}

\textbf{Acknowledgments.} The author would like to thank Anatole Katok for introducing her to the flexibility program and its many interesting problems. She also thanks Federico Rodriguez Hertz for many helpful discussions and comments regarding the constructions presented here.

\section{Proof of the Main theorem}\label{proof_thm}

Let $g$ be a continuous degree $2$ piecewise linear expanding map on $S^1$. Note that $g$ has a unique measure of maximal entropy. We will say that $g$ is well-behaved if it has a unique invariant measure absolutely continuous with respect to the Lebesgue measure. All of the inequalities discussed in the introduction will still hold for a well-behaved $g$. We will begin the proof by constructing well-behaved continuous piecewise linear expanding maps of $S^1$ that take on all possible values of pairs of Lyapunov exponents for the absolutely continuous with respect to Lebesgue and maximal entropy measures. 

\begin{thm}\label{thm_piecewise_linear}
For any positive numbers $a, b$ such that $a < \log 2 < b$, there exists a well-behaved continuous piecewise linear expanding map $g$ of degree $2$ such that $\lambda_{abs}(g)=a$ and $\lambda_{max}(g)=b$. 
\end{thm}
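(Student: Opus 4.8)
The plan is to work entirely within the class of piecewise linear Markov maps, for which both exponents become explicit finite sums. Fix a finite partition $S^1 = I_1\cup\dots\cup I_n$ into intervals on which $g$ is affine with slope $s_i>1$, and assume the partition is Markov, i.e.\ each image $g(I_i)$ is a union of partition elements. The crucial simplification is that $\log|g'|\equiv\log s_i$ is constant on $I_i$, so that for \emph{every} invariant measure $\mu$ one has
\[ \lambda_{\mu}(g)=\sum_{i=1}^{n}\mu(I_i)\,\log s_i. \]
Thus I only need to control the two weight vectors $(\mu_{max}(I_i))_i$ and $(\mu_{abs}(I_i))_i$. For the measure of maximal entropy this is purely combinatorial: $\mu_{max}$ is the Parry measure of the subshift defined by the transition matrix $A$ ($A_{ij}=1$ iff $g(I_i)\supseteq I_j$), and the weights $\mu_{max}(I_i)$ are read off from the Perron left and right eigenvectors of $A$; choosing an irreducible $A$ with Perron eigenvalue $2$ makes $h_{top}(g)=\log 2$, equivalently $g$ has degree $2$. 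The absolutely continuous invariant measure has density constant on each $I_i$, and the vector $(\rho_i)_i$ is the fixed point of the finite-dimensional transfer operator $(L\rho)_i=\sum_{j:\,j\to i}\rho_j/s_j$, unique by Perron--Frobenius when $A$ is irreducible (so $g$ is well-behaved); then $\mu_{abs}(I_i)=\rho_i|I_i|/\sum_k\rho_k|I_k|$.

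With this dictionary the construction separates into two independent mechanisms. To prescribe $\lambda_{abs}$ I start from the two--full--branch map with breakpoint $c$, for which Lebesgue measure is invariant; a direct computation gives $\lambda_{abs}=H(c):=-c\log c-(1-c)\log(1-c)$ and $\lambda_{max}=-\tfrac12\log\!\big(c(1-c)\big)$, so as $c$ ranges over $(0,\tfrac12]$ the value $\lambda_{abs}$ already sweeps all of $(0,\log 2]$. To raise $\lambda_{max}$ without disturbing $\lambda_{abs}$ I insert a thin interval carrying a definite share of the Parry weight but an asymptotically negligible share of the a.c.\ measure: if such an interval has length $\eps$ and slope of order $1/\eps$, its contribution to $\lambda_{max}$ is bounded below by a fixed multiple of $\log(1/\eps)\to\infty$, whereas its a.c.\ weight is $O(\eps)$ and its contribution to $\lambda_{abs}$ is $O(\eps\log(1/\eps))\to 0$. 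I have verified this decoupling on a prototype: the three--interval Markov map with transition matrix of Perron eigenvalue $2$, lengths $(\ell_1,\ell_2,\ell_3)$ and slopes $s_1=(\ell_1+\ell_2)/\ell_1$, $s_2=\ell_3/\ell_2$, $s_3=1/\ell_3$ has $\mu_{max}=(\tfrac14,\tfrac14,\tfrac12)$, an explicit piecewise--constant a.c.\ density, and satisfies $\lambda_{max}\to\infty$ while $\lambda_{abs}$ stays bounded as $\ell_1\to 0$.

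With a family $g_{\mathbf t}$ of such maps in hand, I would define the continuous map $\Phi(\mathbf t)=(\lambda_{abs}(g_{\mathbf t}),\lambda_{max}(g_{\mathbf t}))$ and prove that its image contains the open quadrant $\{a<\log 2<b\}$. Connectedness of the parameter domain together with the intermediate value theorem reduces this to exhibiting, for each target $a\in(0,\log 2)$, two limiting regimes at (essentially) fixed $\lambda_{abs}=a$: one driving $\lambda_{max}\to\infty$ (the thin high--slope spike above), and one driving $\lambda_{max}\to\log 2^{+}$ (degenerating the spike so that the map tends to an a.c.-exponent-$a$ model whose maximal-entropy slope relaxes to $\log 2$). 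The main obstacle is this second regime near the corner: since the equality $\lambda_{abs}=\log 2=\lambda_{max}$ can only occur simultaneously, the point $(a,\log 2)$ is never attained, and I must show that $\log 2$ is nonetheless the exact infimum of $\lambda_{max}$ for every fixed $a$, including $a$ close to $\log 2$. Pinning $\lambda_{abs}$ at $a$ forces the a.c.\ measure to concentrate on slow (slope near $1$) regions while the combinatorially fixed Parry measure must still average $\log s_i$ down toward $\log 2$; balancing these competing constraints appears to require letting the number of intervals grow as $a$ approaches $\log 2$, and controlling the resulting transfer-operator eigenvectors uniformly. Establishing this uniform control---and thereby the surjectivity of $\Phi$ onto the whole quadrant---is where the real work of the proof will lie.
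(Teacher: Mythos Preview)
Your framework---piecewise linear Markov maps, exponents as finite weighted sums of $\log s_i$, Parry weights for $\mu_{max}$ and a finite-dimensional transfer operator for the a.c.\ density---is exactly the one the paper uses, and your spike mechanism for pushing $\lambda_{max}\to\infty$ at essentially fixed $\lambda_{abs}$ is correct and matches the paper's second perturbation. The proposal is, however, incomplete at precisely the point you flag: for a fixed $a<\log 2$ you need maps with $\lambda_{abs}=a$ and $\lambda_{max}$ arbitrarily close to $\log 2$, and you do not supply a construction. Your two-full-branch model cannot do it: there $\mu_{max}(I_i)=\tfrac12$ for both branches, so forcing $\lambda_{abs}$ down to a small $a$ requires one slope near $1$ and the other of order $1/c$, and the Parry average $-\tfrac12\log\bigl(c(1-c)\bigr)$ is then large, not near $\log 2$. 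In particular the difficulty is most acute when $a$ is \emph{small}, not when $a\to\log 2$ as your last paragraph suggests; near $a=\log 2$ the two-branch map already has $\lambda_{max}$ close to $\log 2$.

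The paper resolves this with one clean idea you are missing: confine the slow region to an interval of \emph{exponentially small Parry weight}. Concretely, keep the map equal to $\times 2$ outside the dyadic interval $[0,2^{-n})$, and on $[0,2^{-n})$ replace it by two affine pieces of slopes $1/u$ and $1/(1-u)$ (so the image is still $[0,2^{-n+1})$ and the Markov structure is preserved). The Parry measure of $[0,2^{-n})$ is exactly $2^{-n}$, so for large $n$ this region contributes almost nothing to $\lambda_{max}$, which therefore stays near $\log 2$. On the other hand, as $u\to 1$ the a.c.\ density on the slope-$1/u$ piece exceeds the density on the rest of the circle by the factor $1/(2(1-u))$, so the a.c.\ measure concentrates on the slow piece and $\lambda_{abs}\to 0$. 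The paper makes this quantitative: fixing $\lambda_{max}-\log 2=\alpha>0$ and solving for $u=u_N$ gives $1-u_N\sim\tfrac14 e^{-2^{N+1}\alpha}$, whence $(1-u_N)(2^N-1)\to 0$ and $\lambda_{abs}\to 0$ as $N\to\infty$. This single extra integer parameter $n$ is exactly the ``uniform control'' you were searching for, and it requires no growing combinatorial complexity---just one localized two-piece perturbation. Combined with an independent spike on $[\tfrac12-2^{-k},\tfrac12)$ (to raise $\lambda_{max}$) and the continuity/intermediate-value argument you already outline, this fills the whole quadrant.
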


\begin{proof}
We define the following family of functions $f(x; n, \delta; k, \eps)$ on a normalized circle $S^1 = \mathbb R / \mathbb Z$, where $n$ and $k$ are integers greater than or equal to $2$, $\delta\in[2^{-n-1};2^{-n})$ and $\eps\in[2^{-k-1};2^{-k})$. The graph of the function $f(x; n, \delta; k, \eps)$ is displayed in Figure~\ref{new_map}.

\[
f(x; n, \delta; k, \eps) = \left\{
  \begin{aligned}
    &  \frac{1}{2^n\delta}x\quad& \text{ if } \quad & x \in [0;\delta),\\
    &  \frac{1}{1-2^n\delta}x+\left(2^{-n}-\frac{\delta}{1-2^n\delta}\right)\quad& \text{ if } \quad & x \in [\delta;2^{-n}),\\
    &  \frac{1}{1-2^k\eps}x+(2^{-1}-2^{-k})\left(2-\frac{1}{1-2^k\eps}\right)\quad& \text{ if } \quad & x \in [2^{-1}-2^{-k};2^{-1}-\eps),\\
    &  \frac{1}{2^k\eps}x+\left(1-\frac{1}{2^{k+1}\eps}\right)\quad& \text{ if } \quad & x \in [2^{-1}-\eps;2^{-1}),\\
		& 2x \pmod 1 \qquad& &\text{ otherwise }.\\
  \end{aligned} \right.
\]

Observe that $f(x; n, 2^{-n-1}; k, 2^{-k-1}) = 2x \pmod 1$ for any values of parameters $n$ and $k$ that we have allowed. Moreover, the maps $f(x; n, \delta; k, \eps)$ are of degree $2$. The maps $f(x; n , \delta; k, \eps)$ are well-behaved by \cite{BS79} as the partition of $S^1$ into pieces where $f$ is linear can be refined to a Markov partition.

\begin{figure}[H]
  \centering
  \includegraphics[width=.5\linewidth]{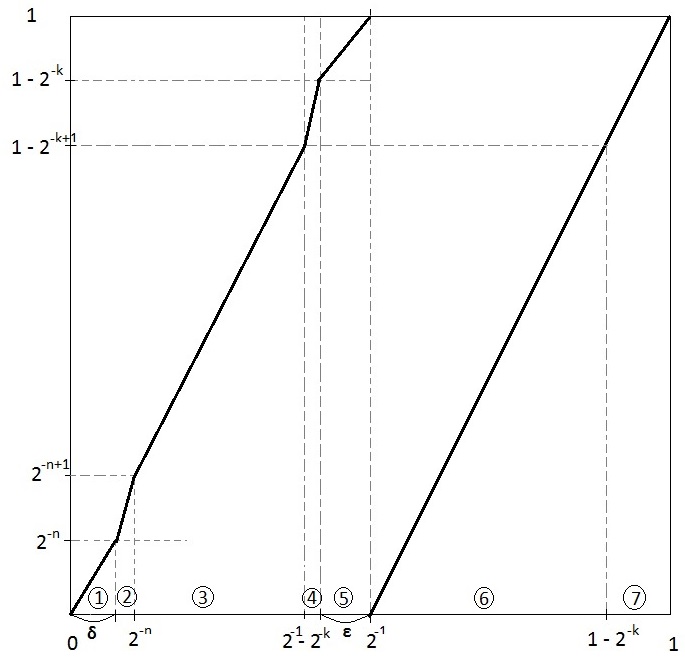}
    \caption{A representative of the constructed family of maps of degree $2$}
    \label{new_map}
  \end{figure}

Let us denote by $I_j$ the segment labeled in Figure~\ref{new_map} by the circled number $j$, where $j=1, 2, \hdots, 7$. The preimages of the fixed point $x=0$ generate a Markov partition for $f(x; n, \delta; k, \eps)$. By the choice of functions $f(x; n, \delta; k, \eps)$, we have that the non-smooth points of $f(x; n, \delta; k, \eps)$ are exactly at endpoints of intervals of this Markov partition. Therefore, we obtain for the measure of maximal entropy (Parry measure \cite{HK95}) that the measures of $I_1$ and $I_2$ are equal to $2^{-(n+1)}$, the measures of $I_4$ and $I_5$ are equal to $2^{-(k+1)}$, and the total measure of the rest is $1-2^{-n}-2^{-k}$. Thus, we can compute the Lyapunov exponent of $f(x; n, \delta; k, \eps)$ with respect to the measure of maximal entropy.

\begin{align}\label{max_entropy_exp}
\lambda_{max}(f(x; n, \delta; k, \eps)) &= 2^{-(n+1)}\log\left(\frac{1}{2^n\delta}\right)+2^{-(n+1)}\log\left(\frac{1}{1-2^n\delta}\right)+ \nonumber\\&+2^{-(k+1)}\log\left(\frac{1}{2^k\eps}\right)+2^{-(k+1)}\log\left(\frac{1}{1-2^k\eps}\right)+(1-2^{-n}-2^{-k})\log 2 = \\
&= -\frac{\log(2^n\delta)+\log(1-2^n\delta)}{2^{n+1}}-\frac{\log(2^k\eps)+\log(1-2^k\eps)}{2^{k+1}}+(1-2^{-n}-2^{-k})\log 2 \nonumber
\end{align}

To be able to compute the Lyapunov exponent of $f(x; n, \delta; k, \eps)$ with respect to the probability invariant measure $\mu$ which is absolutely continuous with respect to the Lebesgue measure, we need to find this invariant measure, i.e., the invariant density $q$. For simplicity, we omit the dependence of the measure $\mu$ and the function $q$ on the parameters $n, \delta, k$ and $\eps$ in our notation. For each $f(x; n, \delta; k, \eps)$, we try to find $q(x)$ in a special form such as $q(x) = a_j$ if $x\in I_j$, where $a_j$ is some constant and $j=1, 2, \hdots, 7$. This choice of special form is natural as by Theorem 3 in \cite{BS79} we know that for $f(x; n, \delta; k, \eps)$ the unique absolutely continuous invariant density is piecewise constant. We find $a_j$, where $j=1,\dots,7$, from the conditions that the measure is invariant, i.e., the measure of the set is equal to the measure of its preimage under $f(x; n, \delta; k, \eps)$.

We obtain the $a_j$ by examining the conditions needed for the measure of sets of the form $[0; x]$ to be preserved. Note that in order to show that an absolutely continuous measure with respect to the Lebesgue measure is invariant, it is enough to check these sets. We have the following cases.
\begin{enumerate}
\item If $x\in I_1$, we need $a_1 x = a_1 2^n\delta x + a_6\frac{x}{2}$. Therefore, we obtain that $a_6 = 2a_1(1-2^n\delta).$
\item If $x\in I_2$, we need $a_1\delta + a_2(x-\delta) = a_1 2^n\delta x +a_6\frac{x}{2}$. As a result, we obtain that $a_2 = a_1.$
\item If $x\in [2^{-n}; 2^{-n+1})$, we need $$a_1\delta+a_2(2^{-n}-\delta)+a_3(x-2^{-n}) = a_1\delta+a_2(x-2^{-n})(1-2^n\delta)+a_6\frac{x}{2}.$$ By the previous items, this equality is equivalent to the following equalities.
$$a_1(2^{-n}-\delta)+a_3(x-2^{-n}) = a_1(x-2^{-n})(1-2^n\delta)+a_1(1-2^n\delta)x$$
$$2^{-n}a_1(1-2^n\delta)+a_3(x-2^{-n}) = 2a_1(x-2^{-n})(1-2^n\delta)+a_1 2^{-n}(1-2^n\delta)$$
Therefore, we obtain that $a_3 = 2a_1(1-2^n\delta)=a_6$. 
\item If $x\in [2^{-n+1}; 2^{-1}-2^{-k+1})$, we need $$a_1\delta+a_2(2^{-n}-\delta)+a_3(x-2^{-n}) = a_1\delta +a_2(2^{-n}-\delta)+a_3\frac{x-2^{-n+1}}{2}+a_6\frac{x}{2}.$$ This equality is guaranteed by the conditions on $a_1, a_2, a_3, a_6$ from the previous items.
\item If $x\in I_4$, we need 
$$a_3(2^{-1}-2^{-k}-2^{-n})+a_4(x-(2^{-1}-2^{-k})) = a_3\frac{x-2^{-n+1}}{2}+a_6\frac{x}{2}.$$
By the previous items, this equality is equivalent to the following equality.
$$a_3(2^{-1}-2^{-k}-2^{-n})+a_4(x-(2^{-1}-2^{-k})) = a_3(x-(2^{-1}-2^{-k}))+a_3(2^{-1}-2^{-k}-2^{-n}).$$
As a result, we obtain $a_4 = a_3 = 2a_1(1-2^n\delta)$.
\item If $x\in I_5$, we need
$$a_3(2^{-1}-2^{-k}-2^{-n})+a_4(2^{-k}-\eps)+a_5(x-(2^{-1}-\eps)) = a_3\frac{x-2^{-n+1}}{2}+a_6\frac{x}{2}.$$
By the previous items, this equality is equivalent to the following equality.
$$a_3(2^{-1}-2^{-n}-\eps)+a_5(x-(2^{-1}-\eps)) = a_3(x-(2^{-1}-\eps))+a_3(2^{-1}-\eps-2^{-n}).$$
Therefore, $a_5 = a_3 = 2a_1(1-2^n\delta)$.
\item If $x\in [2^{-1}; 1-2^{-k+1})$, we need
$$a_3(2^{-1}-2^{-k}-2^{-n})+a_4(2^{-k}-\eps)+a_5\eps+a_6(x-2^{-1}) = a_3\frac{x-2^{-n+1}}{2}+a_6\frac{x}{2}.$$
This equality holds as we already have that $a_3 = a_4 = a_5 = a_6$.
\item If $x\in [1-2^{-k+1}, 1-2^{-k})$, we need
$$a_4(2^{-k}-\eps)+a_5\eps+a_6(x-2^{-1}) = a_4(x-(1-2^{-k+1}))(1-2^k\eps)+a_6(2^{-1}-2^{-k})+a_7\frac{x-(1-2^{-k+1})}{2}.$$
By the previous items, this equality is equivalent to the following equality.
$$a_3 2^k\eps(x-(1-2^{-k+1})) = a_7\frac{x-(1-2^{-k+1})}{2}.$$
As a result, we obtain $a_7 = 2a_3 2^k\eps = 4a_1(1-2^n\delta)2^k\eps$.
\item If $x\in I_7$, we need
$$a_5\eps+ a_6(2^{-1}-2^{-k})+a_7(x-(1-2^{-k})) = a_5(x-(1-2^{-k}))2^k\eps + a_6(2^{-1}-2^{-k})+a_7\frac{x-(1-2^{-k+1})}{2}.$$
This equality holds under the previous conditions on the constants.
\end{enumerate}

The total measure of the circle $S_1$ for the measure $\mu$ is equal to $1$ if 
$$a_1 = \frac{2^n}{1+2(1-2^n\delta)(2^n-2^{n-k}-1+2^{n-k+1}\cdot 2^k\eps)}.$$

From this calculation, we have that the Lyapunov exponent of $f(x; n, \delta; k, \eps)$ with respect to the probability invariant measure that is absolutely continuous with respect to Lebesgue measure is exactly the following.

\begin{align}\label{abs_exp}
&\lambda_{abs}(f(x; n, \delta; k, \eps)) = \nonumber \\
& = \frac{1}{1+2(1-2^n\delta)(2^n-2^{n-k}-1+2^{n-k+1}\cdot 2^k\eps)}(2(1-2^n\delta)(2^n-2^{n-k+1}-1+2^{n-k+1}\cdot 2^k\eps)\log 2 - \\
&-(2^n\delta \log(2^n\delta)+(1-2^n\delta)\log(1-2^n\delta)) - 2^{n-k+1}(1-2^n\delta)(2^k\eps\log(2^k\eps)+(1-2^k\eps)\log(1-2^k\eps))) \nonumber
\end{align}

Notice that the bounds for $\delta$ depend on $n$ and the bounds for $\eps$ depend on $k$. If we define $u = 2^n\delta$ and $v = 2^k\eps$, then $u$ and $v$ are independent of $n$ and $k$, respectively, and vary in the interval $[\frac{1}{2};1)$. We then consider functions $g(x; n, u, k, v):=f(x; n, 2^{-n}u; l, 2^{-k}v)$. Therefore, we obtain a 4-parameter family of functions $g(x; n, u, k, v)$ which is continuous in $u$ and $v$ for fixed $n$ and $k$. Notice that $g(x; n, \frac{1}{2}, k, \frac{1}{2})$ is the $\times 2$-map for every $n$ and $k$. 

We now show that for any positive numbers $a, b$ such that $a < \log 2 < b$, there exist integers $\bar n, \bar k$ which are greater than 2 and numbers $\bar u, \bar v$ in the interval $[\frac{1}{2};1)$ such that $\lambda_{abs}(g(x;\bar n, \bar u, \bar k, \bar v))=a$ and $\lambda_{max}(g(x;\bar n, \bar u, \bar k, \bar v))=b$.

Rewriting expressions that we obtained for the exponents of $g$ in terms of $u$ and $v$ gives the following.

\begin{equation}\label{max_exp_g}
\lambda_{max}(g(x; n, u, k, v))= -\frac{\log(u)+\log(1-u)}{2^{n+1}}-\frac{\log(v)+\log(1-v)}{2^{k+1}}+(1-2^{-n}-2^{-k})\log 2
\end{equation}

\begin{align}\label{abs_exp_g}
&\lambda_{abs}(g(x; n, u, k, v)) = \nonumber \\
& = \frac{1}{1+2(1-u)(2^n-2^{n-k}-1+2^{n-k+1}v)}(2(1-u)(2^n-2^{n-k+1}-1+2^{n-k+1}\cdot v)\log 2 - \\
&-(u \log u+(1-u)\log(1-u)) - 2^{n-k+1}(1-u)(v\log v+(1-v)\log(1-v))) \nonumber
\end{align}

To show that all possible values of pairs of the considered Lyapunov exponents are realizable, we analyze the boundary cases of parameters. Consider the maps $g(x; n, \frac{1}{2}, k, v)$. These maps are independent of $n$ and act as the $\times 2$-map everywhere except in the interval $[2^{-1}-2^{-k}; 2^{-1})$. First, we will examine the Lyapunov exponent $\lambda_{abs}(g)$ which has the following expression.

\begin{align}
&\lambda_{abs}(g(x; n, \frac{1}{2}, k, v)) = \nonumber \\
&=\frac{1}{1-2^{-k}+2^{-k+1}v}((1-2^{-k+1}+2^{-k+1}v)\log 2 - 2^{-k}(v\log v+(1-v)\log(1-v)))
\end{align}

Notice that the values of $-v \log v - (1-v)\log(1-v)$ belong to the interval $(0; \log 2]$ when $v\in[\frac{1}{2};1)$. The maximum value of that function is attained at $v=\frac{1}{2}$. Also, $-v \log v - (1-v)\log(1-v)$ tends to $0$ as $v$ tends to $1$.

We have that
\begin{equation}
\log 2\geqslant \lambda_{abs}(g(x; n, \frac{1}{2}, k, v))\geqslant \left(1-\frac{2^{-k}}{1-2^{-k}+2^{-k+1}v}\right)\log 2\geqslant \left(1-\frac{1}{2^k}\right)\log 2 
\end{equation}
for every $n$, $k$ and $v\in[\frac{1}{2};1)$. The lower bound on $\lambda_{abs}(g(x; n, \frac{1}{2}, k, v))$ tends to $\log 2$ as $k$ tends to $+\infty$.

Now, let us consider the Lyapunov exponent with respect to the measure of maximal entropy.

\begin{equation}
\lambda_{max}(g(x; n, \frac{1}{2}, k, v)) = (1-2^{-k})\log 2 + \frac{1}{2^{k+1}}(-\log v -\log(1-v))
\end{equation}

 For any fixed $k$, $\lambda_{max}(g(x; n, \frac{1}{2}, k, v))$ is monotonically increasing for $v\in[\frac{1}{2};1)$ and its values vary from $\log 2$ to $+\infty$ as $v$ varies from $\frac{1}{2}$ to $1$.  As a result, by choosing $k$ sufficiently large, we obtain a family of maps $g(x; n, \frac{1}{2}, k, v)$ parametrized by $v\in[\frac{1}{2};1)$
such that $\lambda_{abs}(g)$ is close to $\log 2$ and for any $b\geqslant \log 2$ there exists $\bar v\in[\frac{1}{2};1)$ such that $\lambda_{max}(g(x; n, \frac{1}{2}, k, \bar v)) = b$.

Next, consider the maps $\bar g (x; n, u):= g(x; n, u, k, \frac{1}{2})$. These maps are independent of $k$ and act as the $\times 2$-map everywhere except in the interval $[0; 2^{-n})$. We will examine the Lyapunov exponents as before. We first have the following expression for $\lambda_{abs}$.

\begin{equation}
\lambda_{abs}(\bar g(x; n, u)) = \log 2 + \frac{1}{1+2(1-u)(2^n-1)}\left(-u\log u - (1-u)\log(1-u)-\log 2\right)
\end{equation}

As we noticed before, the values of $-u \log u - (1-u)\log(1-u)$ belong to the interval $(0; \log 2]$ when $u\in[\frac{1}{2};1)$. For fixed $n$, we have that $\lambda_{abs}(\bar g(x; n, \frac{1}{2})) = \log 2$ and $\lambda_{abs}(\bar g(x; n, u))$ tends to $0$ as $u$ tends to $1$.

We now show that $\lambda_{abs}(\bar g(x; n, u))$ monotonically decreases when $u$ varies from $\frac{1}{2}$ to $1$. The derivative of $\lambda_{abs}(\bar g(x; n, u))$ with respect to $u$ is the following.

\begin{align}
\frac{\partial}{\partial u}\lambda_{abs}(\bar g(x; n, u)) &= \frac{2(2^n-1)}{(1+2(1-u)(2^n-1))^2}(-u\log u-(1-u)\log(1-u)-\log 2)+\nonumber\\ &+\frac{1}{1+2(1-u)(2^n-1)}(-\log u -1 +\log(1-u)+1) = \\ &=\frac{1}{(1+2(1-u)(2^n-1))^2}\left(-2(2^n-1)\log2+\log(1-u)-(2^{n+1}-1)\log u\right) \nonumber
\end{align}

Let us denote $y(u) = -2(2^n-1)\log2+\log(1-u)-(2^{n+1}-1)\log u$. Then, $y(\frac{1}{2}) = 0$. Also, we have that its derivative is equal to
$$y'(u) = -\frac{1}{1-u}-\frac{2^{n+1}-1}{u} = -\frac{-2(2^n-1)u+(2^{n+1}-1)}{u(1-u)}<-\frac{1}{u(1-u)}<0, $$
as $u\in[\frac{1}{2};1)$. It follows, that $y(u)<0$ when $u\in[\frac{1}{2};1)$.

Therefore, we obtain that $\frac{\partial}{\partial u}\lambda_{abs}(\bar g(x; n, u))<0$, i.e., $\lambda_{abs}(\bar g(x; n, u))$ is a monotonically decreasing function with respect to $u$.

Now, let us examine the Lyapunov exponent of $\bar g(x; n, u)$ with respect to the measure of maximal entropy.

\begin{equation} \label{quad}
\lambda_{max}(\bar g(x; n, u)) = (1-2^{-n})\log 2 + \frac{1}{2^{n+1}}(-\log u -\log(1-u))
\end{equation}

For any fixed $n$, $\lambda_{max}(\bar g(x; n, u))$ is monotonically increasing for $u\in[\frac{1}{2};1)$ and its values vary from $\log 2$ to $+\infty$ as $u$ varies from $\frac{1}{2}$ to $1$.

We claim that for every $\alpha, \beta >0$ there exists an integer $N\geqslant 2$ and $u_{N} \in[\frac{1}{2}; 1)$ such that $\lambda_{max}(\bar g(x; N, u_N))-\log 2\leqslant\alpha$ and $\lambda_{abs}(\bar g(x; N, u_N))<\beta$.  To see this, we want to express $u$ in terms of $\lambda_{max}(\bar g(x; n, u))$. First, we obtain the quadratic equation
$$u^2 - u + e^{2(2^n-1)\log 2 - 2^{n+1}\lambda_{max}(\bar g(x; n, u))}=0 $$
from equation \ref{quad}. Then, as $u\in[\frac{1}{2};1)$, we have
$$u = \frac{1+\sqrt{1-e^{-2^{n+1}(\lambda_{max}(\bar g(x; n, u)) - \log 2)}}}{2}.$$

If $\lambda_{max}(\bar g(x; N, u_N)) - \log 2 = \alpha$, then $u_N = \frac{1+\sqrt{1-e^{-2^{N+1}\alpha}}}{2}$. Note that $u_N\sim 1-\frac{1}{4}e^{-2^{N+1}\alpha}\rightarrow 1$ as $N\rightarrow +\infty$. Moreover,

\begin{equation}
\lambda_{abs}(\bar g(x; N, u_N)) = \log 2 + \frac{1}{1+2(1-u_N)(2^N-1)}\left(-u_N\log u_N - (1-u_N)\log(1-u_N)-\log 2\right).
\end{equation}

We observe that $-u_N\log u_N - (1-u_N)\log(1-u_N)$ tends to $0$ as $u_N$ tends to $1$. Also,
$$2(1-u_N)(2^N-1)\sim \frac{1}{2}e^{-2^{N+1}\alpha}(2^N-1)\rightarrow 0 \text{ as } N\rightarrow +\infty,$$
because $\lim_{z\rightarrow +\infty}\frac{1}{2}e^{-2z\alpha}(z-1)=0$ if $\alpha\neq 0$. If $\lambda_{max}(\bar g(x; N, u_N)) - \log 2 = \alpha$ and $N$ is large enough, then $\lambda_{abs}(\bar g(x; N, u_N))<\beta$.

As a result, we have verified that for every $a, b$ such that $0<a<\log 2< b$ there exists an integer $N\geqslant 2$ such that $\lambda_{max}(\bar g(x; N, u_N)) - \log 2  = \frac{b-\log 2}{2}$ and $\lambda_{abs}(\bar g(x; N, u_N))<\frac{a}{2}$. From this result and the monotonicity properties of the functions $\lambda_{max}(\cdot)$ and $\lambda_{abs}(\cdot)$ with respect to $u$, we have that $\lambda_{max}(\bar g(x; N, u))-\log 2<\frac{b-\log 2}{2}$ for every $u\in[\frac{1}{2}; u_N]$ and for any $\alpha\in[\frac{a}{2};\log 2]$ there exists $\bar u\in[\frac{1}{2}; u_N]$ such that $\lambda_{abs}(\bar g(x; N, \bar u))=\alpha$.    

Also, there exists an integer $K\geqslant 2$ such that $\lambda_{abs}(g(x; n, \frac{1}{2}, K, v))>a$ for every $v\in [\frac{1}{2}; 1)$. In addition, observe that $\lambda_{max}(g(x;n,u, k, v))$ is monotonic increasing in $u \in [1/2; 1)$ for fixed $n, k$ and $v$. Using this observation, the monotonicity properties of $\lambda_{abs}(g(x; n, u, k, \frac{1}{2}))$ and $\lambda_{max}(g(x; n, u, k, \frac{1}{2}))$ with respect to $u$, and the continuity of $\lambda_{abs}(g(x; n, u, k, v))$and $\lambda_{max}(g(x; n, u, k, v))$ with respect to $u$ and $v$ for every $n$ and $k$, we complete the proof by deducing the following statement. There exists $\bar u,\bar v\in[\frac{1}{2};1)$ such that $\lambda_{abs}(g(x; N, \bar u, K, \bar v)) = a$ and $\lambda_{max}(g(x; N, \bar u, K, \bar v))=b$.
\end{proof}

Now, we are left to smooth the piecewise linear maps that we have constructed. Let us consider a continuous family of maps $\{f_{s,t}\}$, $s,t\in[0;1]$, on $S^1$ coming from Theorem~\ref{thm_piecewise_linear} such that these maps realize all pairs of values for the considered Lyapunov exponents in some neighborhood of the pair $(a,b)$, where the first coordinate is the value of $\lambda_{abs}$ and the second coordinate is the $\lambda_{max}$. There exists a smooth family (with respect to the parameters) of smooth expanding maps $\{f^{\alpha}_{s,t}\}$, $s,t\in[0;1]$, $\alpha\in (0;\alpha_{(a,b)}]$, where $\alpha_{(a,b)}$ is sufficiently small, such that $f^{\alpha}_{s,t}$ coincides with $f_{s,t}$ outside $\alpha$-neighborhoods of finitely many points (the points where $f_{s,t}$ is non-smooth) and $f^{\alpha}_{s,t}$ converges to $f_{s,t}$ uniformly as $\alpha$ tends to $0$. The existence follows from the fact that two linear maps can be smoothed at a point where their values agree with the derivative of the smoothed map in this neighborhood varying between the values of the derivative of the corresponding two linear pieces. 

In a smooth family of maps such as $f^{\alpha}_{s,t}$, the values of the Lyapunov exponents of interest vary continuously. As the family of maps is smooth, we have that the derivative of $f^{\alpha}_{s,t}$ on $S^1$, the fixed point and its preimages vary continuously in $s$, $t$ and $\alpha$. By the definition of the Lyapunov exponent and the construction of the measure of maximal entropy, we obtain that the Lyapunov exponent with respect to the measure of maximal entropy varies continuously. Moreover, smooth expanding maps on a circle have a unique absolutely continuous invariant measure and the invariant density is the fixed point of the Frobenius-Perron operator, which implies that it varies continuously (with respect to all the parameters) for $f^{\alpha}_{s,t}$. We would like to show that for sufficiently small $\alpha$, we can realize an open set of pairs of the Lyapunov exponents of interest such that the point $(a,b)$ lies inside that set. The main theorem then follows. 

From the construction of $f^{\alpha}_{s,t}$, we have that $\lambda_{max}(f^{\alpha}_{s,t})\rightarrow \lambda_{max}(f_{s,t})$ as $\alpha$ tends to $0$ for every $s,t\in[0;1]$ as any Markov partition (with elements of arbitrarily small length) for $f^{\alpha}_{s,t}$ converges to the corresponding Markov partition for $f_{s,t}$ as $\alpha\rightarrow 0$ (corresponding boundary points of the partitions converge). The Markov partitions we are considering are the partitions which we get for each map $f^\alpha_{s,t}$ and $f_{s,t}$ from the preimages of the fixed point. In particular, the fixed point of $f^\alpha_{s,t}$ converges to the fixed point of $f_{s,t}$.

Let us consider what happens with the Lyapunov exponent with respect to the probability invariant measure which is absolutely continuous with respect to Lebesgue measure. In \cite{GB89} P. G\'{o}ra and A. Boyarsky prove the following compactness theorem for the set of invariant densities for any family of expanding piecewise smooth maps.

\begin{thml}[\cite{GB89}, Theorem 1]
Let $\{\tau_{\alpha}\}_{\alpha\in \mathcal A}$ be a family of expanding piecewise smooth transformations satisfying the following conditions:
\begin{enumerate}
\item There is a constant $\lambda>1$ such that $|\tau'_\alpha(x)|\geqslant \lambda,$ whenever the derivative exists for any $\alpha\in \mathcal A$;
\item There exists a constant $W>0$ such that for any $\alpha\in\mathcal A$ we have the variation $Var\left|\frac{1}{\tau'_\alpha}\right|\leqslant W$;
\item There exists a constant $\delta>0$ such that for any $\alpha\in\mathcal A$, there exists a finite partition $\mathcal K_\alpha$ such that for $I\in\mathcal K_\alpha$, $\tau_\alpha|_I$ is one-to-one, $\tau_\alpha(I)$ is an interval, and $\min\limits_{I\in\mathcal K_\alpha} diam (I)>\delta$;
\item For any $m\geqslant 1$, there exists $\delta_m>0$ such that if $\mathcal K_\alpha^{(m)}=\bigvee_{j=0}^{m-1}\tau^{-j}_{\alpha}(\mathcal K_\alpha)$, then 

$\min\limits_{I\in\mathcal K^{(m)}_{\alpha}} diam(I)\geqslant\delta_m>0$.
\end{enumerate}
Then, any $\tau_\alpha, \alpha\in \mathcal A$, admits an invariant density $q_{\alpha}$, and the set $\{q_{\alpha}\}_{\alpha\in\mathcal A}$ is of uniformly bounded variation and hence is precompact in $L^1$. 
\end{thml}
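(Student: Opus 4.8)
The plan is to prove this through the standard Lasota--Yorke machinery for the Frobenius--Perron (transfer) operators $P_\alpha$ associated to the maps $\tau_\alpha$, the whole point being to extract \emph{uniform} constants from hypotheses (1)--(4). Recall that $P_\alpha$ acts on $L^1(S^1)$ by $(P_\alpha f)(x) = \sum_{\tau_\alpha(y)=x} f(y)/|\tau_\alpha'(y)|$ and that $q_\alpha$ is an invariant density precisely when $P_\alpha q_\alpha = q_\alpha$. Since functions of bounded variation that are uniformly bounded in the $\mathrm{BV}$-seminorm form a precompact subset of $L^1$ (Helly's selection theorem), the entire statement reduces to two things: the existence of at least one $\mathrm{BV}$ fixed point of each $P_\alpha$, and a uniform bound $\mathrm{Var}(q_\alpha)\le C$ with $C$ independent of $\alpha$.

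First I would establish a one-step Lasota--Yorke inequality for each $\tau_\alpha$. Splitting the defining sum over the partition $\mathcal K_\alpha$ and integrating by parts on each monotone piece yields an estimate of the form
\begin{equation*}
\mathrm{Var}(P_\alpha f) \le 2\sup\left|\frac{1}{\tau_\alpha'}\right|\,\mathrm{Var}(f) + \left(\mathrm{Var}\left|\frac{1}{\tau_\alpha'}\right| + \frac{2}{\min_{I\in\mathcal K_\alpha}\mathrm{diam}(I)}\sup\left|\frac{1}{\tau_\alpha'}\right|\right)\|f\|_1.
\end{equation*}
Here hypotheses (1), (2), (3) make every ingredient uniform: $\sup|1/\tau_\alpha'|\le \lambda^{-1}$, $\mathrm{Var}|1/\tau_\alpha'|\le W$, and $\min_{I}\mathrm{diam}(I)>\delta$. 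Hence the coefficient of $\|f\|_1$ is bounded by a single constant $B_0 = W + 2\lambda^{-1}\delta^{-1}$, while the coefficient of $\mathrm{Var}(f)$ is at most $2\lambda^{-1}$, both independent of $\alpha$.

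Since $2\lambda^{-1}$ need not be less than $1$, the next step is to iterate. Applying the same estimate to $\tau_\alpha^m$, whose monotonicity partition is exactly $\mathcal K_\alpha^{(m)}$, and using the chain rule to get $\sup|1/(\tau_\alpha^m)'|\le \lambda^{-m}$ together with hypothesis (4) to bound $\min_{I\in\mathcal K_\alpha^{(m)}}\mathrm{diam}(I)\ge\delta_m$ from below, gives $\mathrm{Var}(P_\alpha^m f) \le 2\lambda^{-m}\,\mathrm{Var}(f) + B_m\|f\|_1$ with $B_m$ uniform in $\alpha$. Fixing $m$ so large that $A:=2\lambda^{-m}<1$, I obtain a genuine contraction on the variation seminorm modulo the $L^1$ term. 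Existence of $q_\alpha$ then follows by a Krylov--Bogolyubov argument: the Cesàro averages $\frac1N\sum_{j=0}^{N-1}P_\alpha^j\mathbf{1}$ have uniformly bounded variation, so a subsequence converges in $L^1$ to a nonnegative fixed point $q_\alpha$ with $\int q_\alpha = 1$. Applying the iterated inequality to $q_\alpha$ itself, using $P_\alpha^m q_\alpha = q_\alpha$ and $\|q_\alpha\|_1 = 1$, gives
\begin{equation*}
\mathrm{Var}(q_\alpha) \le A\,\mathrm{Var}(q_\alpha) + B_m, \qquad\text{hence}\qquad \mathrm{Var}(q_\alpha)\le \frac{B_m}{1-A},
\end{equation*}
a bound independent of $\alpha$, and precompactness of $\{q_\alpha\}$ in $L^1$ is then immediate from Helly's theorem.

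The hard part will be the uniformity of the $\|f\|_1$-coefficient under iteration. A priori the number of monotonicity intervals of $\tau_\alpha^m$ can grow with $m$ and the pieces can shrink, which would destroy control of the $1/\min\mathrm{diam}$ term; this is precisely what hypothesis (4) is designed to prevent. The technical heart of the argument is therefore the bookkeeping that, with $\delta_m$ in hand, bounds $\mathrm{Var}|1/(\tau_\alpha^m)'|$ (via the chain rule and (1), (2)) and the boundary contributions across the refined partition $\mathcal K_\alpha^{(m)}$ by constants depending only on $\lambda, W, \delta, \delta_m$ and $m$, and never on $\alpha$.
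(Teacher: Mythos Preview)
The paper does not prove this theorem; it is quoted as Theorem~1 of G\'ora--Boyarsky \cite{GB89} and used as a black box, so there is no proof in the paper to compare your proposal against. Your sketch is the standard Lasota--Yorke argument, which is exactly the method of the original \cite{GB89}: hypotheses (1)--(4) are tailored precisely so that the constants in the iterated inequality $\mathrm{Var}(P_\alpha^m f)\le 2\lambda^{-m}\mathrm{Var}(f)+B_m\|f\|_1$ are uniform in $\alpha$, after which the fixed-point bound and Helly's theorem finish the job. Your outline is correct and matches the source.
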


For every $s,t\in[0;1]$, we can apply the theorem above for the family of maps $f^{\alpha}_{s,t}$, where $A = [0;\alpha_{(a,b)}]$ and $f^{0}_{s,t} = f_{s,t}$. All the conditions are obviously satisfied with $\mathcal K_\alpha$ being the Markov partition given by the preimages of the fixed points. As a result, we get the following.

\begin{lem}
For every $s,t\in[0;1]$ let $q^{\alpha}_{s,t}$ be the invariant densities for the expanding maps $f^{\alpha}_{s,t}$ (constructed above), where $\alpha\in[0;\alpha_{(a,b)}]$. Then, we have that $q^{\alpha}_{s,t}$ converges to $q^0_{s,t}$ in $L^1$ as $\alpha$ tends to $0$.
\end{lem}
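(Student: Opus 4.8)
The plan is to combine the precompactness furnished by Theorem~A with a uniqueness argument: I will show that every $L^1$-limit point of the family $\{q^\alpha_{s,t}\}_{\alpha\in[0;\alpha_{(a,b)}]}$ as $\alpha\to 0$ is an invariant density for $f^0_{s,t}$, and then invoke the fact that $f^0_{s,t}=f_{s,t}$ is well-behaved, so that its absolutely continuous invariant density is unique. Since a precompact family all of whose subsequential limits coincide must converge, this yields $q^\alpha_{s,t}\to q^0_{s,t}$ in $L^1$.

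In more detail, I first apply Theorem~A to the family $\{f^\alpha_{s,t}\}_{\alpha\in[0;\alpha_{(a,b)}]}$, whose hypotheses have already been verified, to conclude that $\{q^\alpha_{s,t}\}$ is precompact in $L^1$. Fixing $s,t$, let $\alpha_j\to 0$ be an arbitrary sequence. By precompactness I may pass to a subsequence (not relabeled) along which $q^{\alpha_j}_{s,t}\to\bar q$ in $L^1$. Passing to a further subsequence converging almost everywhere shows $\bar q\geqslant 0$ almost everywhere, and $\int_{S^1}\bar q\,dx=\lim_j\int_{S^1}q^{\alpha_j}_{s,t}\,dx=1$, so $\bar q$ is a probability density.

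The key step is to show that the measure $\bar\mu=\bar q\,dx$ is $f^0_{s,t}$-invariant, which I will verify in its weak (dual) form: $\int_{S^1}(\phi\circ f^0_{s,t})\,\bar q\,dx=\int_{S^1}\phi\,\bar q\,dx$ for every continuous $\phi$ on $S^1$. Starting from the invariance $\int(\phi\circ f^{\alpha_j}_{s,t})\,q^{\alpha_j}_{s,t}\,dx=\int\phi\,q^{\alpha_j}_{s,t}\,dx$ of $q^{\alpha_j}_{s,t}$, I let $j\to\infty$. The right-hand side converges to $\int\phi\,\bar q\,dx$ because $\phi$ is bounded and $q^{\alpha_j}_{s,t}\to\bar q$ in $L^1$. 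For the left-hand side I split
$$\int(\phi\circ f^{\alpha_j}_{s,t})\,q^{\alpha_j}_{s,t}\,dx-\int(\phi\circ f^0_{s,t})\,\bar q\,dx=\int(\phi\circ f^{\alpha_j}_{s,t})(q^{\alpha_j}_{s,t}-\bar q)\,dx+\int(\phi\circ f^{\alpha_j}_{s,t}-\phi\circ f^0_{s,t})\,\bar q\,dx.$$
The first term is bounded by $\|\phi\|_\infty\,\|q^{\alpha_j}_{s,t}-\bar q\|_{1}$ and tends to $0$; the second tends to $0$ because $f^{\alpha_j}_{s,t}\to f^0_{s,t}$ uniformly and $\phi$ is uniformly continuous, so $\phi\circ f^{\alpha_j}_{s,t}\to\phi\circ f^0_{s,t}$ uniformly while $\|\bar q\|_1=1$. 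Hence the left-hand side converges to $\int(\phi\circ f^0_{s,t})\,\bar q\,dx$, and the weak invariance identity follows, so $\bar q$ is an invariant density for $f^0_{s,t}$.

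Finally, since $f^0_{s,t}=f_{s,t}$ is well-behaved, its absolutely continuous invariant density is unique, whence $\bar q=q^0_{s,t}$. As the sequence $\alpha_j\to 0$ was arbitrary and every $L^1$-convergent subsequence has the same limit $q^0_{s,t}$, precompactness forces the full family $q^\alpha_{s,t}$ to converge to $q^0_{s,t}$ in $L^1$ as $\alpha\to 0$. The main obstacle is the passage to the limit in the invariance identity, where the composition $\phi\circ f^{\alpha_j}_{s,t}$ must be controlled simultaneously with the $L^1$-convergence of the densities; the weak formulation is precisely what makes this clean, since it replaces the Frobenius--Perron operators, whose dependence on the map is more delicate to handle directly, by bounded continuous test functions.
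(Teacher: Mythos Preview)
Your proof is correct and shares the paper's overall three-step structure: precompactness from Theorem~A, identification of every $L^1$-limit point as an invariant density for $f^0_{s,t}$, and then uniqueness of the absolutely continuous invariant measure for the well-behaved map $f_{s,t}$. The difference lies in how the middle step is carried out. The paper defers to an adaptation of Lemma~4 and Theorem~3 in \cite{GB89}, working on the Frobenius--Perron side and exploiting that the Markov partitions of $f^\alpha_{s,t}$ generated by preimages of the fixed point converge to the corresponding partition of $f_{s,t}$. You instead pass to the dual (Koopman) side, testing against continuous $\phi$ and using only the uniform convergence $f^{\alpha}_{s,t}\to f^0_{s,t}$ together with the $L^1$-convergence of the densities; the splitting and the two estimates you give are exactly what is needed. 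Your route is more elementary and entirely self-contained---it requires neither consulting \cite{GB89} nor invoking the Markov structure. What the paper's route buys in exchange is consistency with the Frobenius--Perron framework already in play, and the Markov partition convergence is established anyway for the $\lambda_{max}$ argument, so it comes at no additional cost there.
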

\begin{proof}
We need to slightly adapt the proofs of Lemma 4 and Theorem 3 in \cite{GB89}, which imply that any limit point of $\{q^{\alpha}\}_{\alpha\in(0;\alpha_{(a,b)}]}$ is an invariant density of $f_{s,t}$. From the uniqueness of the measure absolutely continuous with respect to the Lebesgue measure for $f_{s,t}$, the lemma follows. The modification of the proof of Lemma 4 in \cite{GB89} consists of considering for each $f^{\alpha}_{s,t}$ the Markov partitions coming from the preimages of the fixed point which converge to the Markov partition of $f_{s,t}$.   
\end{proof}

Finally, let $B^{\alpha}_{s,t}$ be a set of pairs of the Lyapunov exponents of interest realized by $f^{\alpha}_{s,t}$, and $\partial B^{\alpha}_{s,t}$ be its boundary. For every $s,t\in[0;1]$, we obtain that $\lambda_{abs}(f^{\alpha}_{s,t})\rightarrow \lambda_{abs}(f_{s,t})$ and $\lambda_{max}(f^{\alpha}_{s,t})\rightarrow \lambda_{max}(f_{s,t})$ as $\alpha\rightarrow 0$. Note that $B^0_{s,t}$ is a square-like set and contains a point $(a,b)$ in the interior. Therefore, the winding number of $\partial B^0_{s,t}$ around $(a,b)$ is non-zero. Moreover, by the construction of $f^{\alpha}_{s,t}$ (smoothing of $f^0_{s,t}$ in $\alpha$-neighborhoods of finitely many points), $\partial B^{\alpha}_{s,t}$ converges to $\partial B^0_{s,t}$ uniformly as $\alpha\rightarrow 0$. Thus, there exists a sufficiently small $\beta$ such that the winding number of $\partial B^{\beta}_{s,t}$ around $(a,b)$ is non-zero. As a result, by the continuity of the Lyapunov exponents of interest, $B^{\beta}_{s,t}$ contains the point $(a,b)$.

\Addresses
\end{document}